\documentclass[a4paper, marginparwidth=15mm]{amsart}

\usepackage{amsthm}
\usepackage{amssymb}
\usepackage{dsfont}
\usepackage{enumitem}
\usepackage{stmaryrd}
\usepackage{colonequals}
\usepackage[mathcal]{euscript}
\usepackage{tikz-cd} 
\usepackage[all]{xy}
\SelectTips{cm}{}
\usepackage{microtype}
\usepackage{mathtools}

\usepackage[top=35mm, bottom=35mm, outer=30mm, inner=30mm, marginparwidth=20mm]{geometry}

\usepackage{hyperref}
\hypersetup{%
  bookmarksnumbered=true,%
  colorlinks=true,%
  linkcolor=[RGB]{0, 0, 153},
  citecolor=[RGB]{0,153,51},      
  urlcolor=[RGB]{153,0,0},
  filecolor=blue,%
  menucolor=blue,%
  bookmarksopen=true,%
  bookmarksdepth=2,%
  pageanchor=true,
  breaklinks=true,
  pdfusetitle=true}

\usepackage[nameinlink,capitalize]{cleveref}
\crefname{equation}{}{}
\usepackage[backend=bibtex, style=alphabetic, maxnames=50]{biblatex}
\renewbibmacro{in:}{}
\addbibresource{references.bib}
\setcounter{biburllcpenalty}{200}
\setcounter{biburlucpenalty}{200}
\setcounter{biburlnumpenalty}{100}  

\makeatletter
\@namedef{subjclassname@2020}{\textup{2020} Mathematics Subject Classification}
\makeatother


\numberwithin{equation}{section}


\theoremstyle{plain}
\newtheorem{theorem}[equation]{Theorem}
\newtheorem*{theorem*}{Theorem}
\newtheorem{proposition}[equation]{Proposition}
\newtheorem{lemma}[equation]{Lemma} 
\newtheorem{corollary}[equation]{Corollary}
\newtheorem{conjecture}[equation]{Conjecture}
\newtheorem*{conjecture*}{Conjecture}

\theoremstyle{definition}
\newtheorem{definition}[equation]{Definition}

\theoremstyle{remark}
\newtheorem{remark}[equation]{Remark} 

\newtheorem*{ack}{Acknowledgements}

\newcommand*{\intref}[2]{\def\tmp{#1}\ifx\tmp\empty\hyperref[#2]{\ref*{#2}}\else\hyperref[#2]{#1~\ref*{#2}}\fi}

\hyphenation{Grothen-dieck} 
\hyphenation{com-mu-ta-tive}


\renewcommand{\dim}{\operatorname{dim}}

\newcommand{\Ext}{\operatorname{Ext}}
\newcommand{\ext}{\operatorname{ext}}

\newcommand{\Hom}{\operatorname{Hom}}

\newcommand{\Pic}{\operatorname{Pic}}
\newcommand{\prfdg}{\mathcal{P}\!\mathit{erf}\!\operatorname{--}}

\newcommand{\rk}{\operatorname{rk}}

\newcommand{\Spec}{\operatorname{Spec}}



\newcommand{\mcA}{\mathcal{A}}
\newcommand{\mcB}{\mathcal{B}}

\newcommand{\mcO}{\mathcal{O}}

\newcommand{\mcT}{\mathcal{T}}

\newcommand{\sfD}{\mathsf D}

\newcommand{\sfK}{\mathsf K}

\newcommand{\bbC}{\mathbb C}

\newcommand{\bbP}{\mathbb P}
 
\newcommand{\bbZ}{\mathbb Z}


\title[A Phantom on a Rational Surface]{A Phantom on a Rational Surface}

\author[Johannes~Krah]{Johannes Krah}
\address{Fakult\"at f\"ur Mathematik, Universit\"at Bielefeld, D-33501 Bielefeld, Germany
}
\email{jkrah@math.uni-bielefeld.de}
\thanks{The research was funded by the Deutsche Forschungsgemeinschaft (DFG, German Research Foundation) -- SFB-TRR 358/1 2023 -- 491392403}

\begin{document}

\begin{abstract}
	We construct a non-full exceptional collection of maximal length consisting of line bundles on the blow-up of the projective plane in $10$ points in general position. This provides a counterexample to a conjecture of Kuznetsov and to a conjecture of Orlov.
\end{abstract}

\keywords{Rational Surfaces, Derived Categories, Exceptional Collections, Phantoms}
\subjclass[2020]{14F08; (14J26, 14C20)}


\maketitle
\section{Introduction}
Let $X$ be a smooth projective variety over the field of complex numbers and denote by $\sfD^b(X)$ the bounded derived category of coherent sheaves on $X$. A nontrivial admissible subcategory $\mcA \subseteq \sfD^b(X)$ is called a \emph{phantom} if the Grothendieck group $\sfK_0(\mcA)$ vanishes.
The first example of a phantom was constructed by Gorchinskiy--Orlov \cite{gorchinskiy_orlov_geometric_phantom_categories}.
We provide the first example of a variety which admits a full exceptional collection and a phantom subcategory.
\begin{theorem}
	\label{thm:main_thm}
	Let $X$ be the blow-up of $\bbP^2_\bbC$ in $10$ closed points $p_1,\dots, p_{10}\in \bbP^2_\bbC$ in general position. Denote by $H$ the divisor class obtained by pulling back the class of a hyperplane in $\bbP^2_\bbC$ and denote by $E_i$ the class of the exceptional divisor over the point $p_i$, $1\leq i \leq 10$. 
	Then
	\begin{align}\label{eq:the_collection_introduction}
		&\langle \mcO_X, \mcO_X(D_1), \dots, \mcO_X(D_{10}), \mcO_X(F), \mcO_X(2F) \rangle \subseteq \sfD^b(X), \tag{1} \\
		\text{where}\quad &D_i  \coloneqq -6H + 2\sum_{j=1}^{10} E_j - E_i \quad \text{and} \quad	F  \coloneqq -19H+6\sum_{i=1}^{10}E_i, \nonumber
	\end{align}
	is an exceptional collection of maximal length which is not full.
\end{theorem}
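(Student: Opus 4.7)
The proof naturally splits into three parts: maximal length, exceptionality, and non-fullness. Maximal length is immediate from the blow-up formula $\rk \sfK_0(X) = \rk \sfK_0(\bbP^2_\bbC) + 10 = 13$, which matches the $13$ objects appearing in \eqref{eq:the_collection_introduction}.

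For exceptionality, writing $L_0,L_1,\dots,L_{12}$ for the collection in the given order, I would verify that $\RHom(L_j,L_i) = 0$ whenever $i < j$; that is, $\nH^*(X,\mcO_X(\Delta)) = 0$ for each difference divisor $\Delta$ of an ``earlier minus later'' pair. These differences fall into a handful of types: $E_j - E_i$ within the $D_\bullet$ block; $-D_i$, $-F$, $-2F$ coming from $\mcO_X$; $D_i - F$ and $D_i - 2F$; and $-F$ between $\mcO_X(F)$ and $\mcO_X(2F)$. For each, I would combine Serre duality $\nH^2(X,\mcO_X(\Delta)) \cong \nH^0(X,\mcO_X(K_X-\Delta))^\vee$ with push-forward along $\pi \colon X \to \bbP^2_\bbC$ and Riemann--Roch. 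In every case the candidate non-vanishing reduces to the existence of a plane curve of a specific degree passing through a subset of $\{p_1,\dots,p_{10}\}$ with prescribed multiplicities; the general-position hypothesis excludes such curves by a dimension count, after which Riemann--Roch forces $\nH^1 = 0$ once $\nH^0$ and $\nH^2$ have been shown to vanish.

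For non-fullness, let $\mcA \subseteq \sfD^b(X)$ denote the admissible subcategory generated by \eqref{eq:the_collection_introduction}. Since the Euler-pairing matrix of any exceptional collection is upper-unitriangular, the thirteen classes $[L_0],\dots,[L_{12}]$ are linearly independent in $\sfK_0(X)$ and therefore form a basis, so $\sfK_0(\mcA^\perp) = 0$. Consequently non-fullness is equivalent to exhibiting a single non-zero object in $\mcA^\perp$, which will then automatically be a phantom. To produce such an object I would choose an auxiliary line bundle $\mcO_X(G)$ --- naturally suggested by the numerology of the coefficients of the $D_i$ and $F$ --- and compute its right mutation through $\mcA$; if this mutation is non-zero, it lies in $\mcA^\perp$, and its non-vanishing should reduce to a specific non-vanishing $\nH^k(X,\mcO_X(G')) \neq 0$ for an auxiliary line bundle $G'$ attached to the configuration $(p_1,\dots,p_{10})$.

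The central obstacle is precisely this last step. At the numerical level $\mcA$ is indistinguishable from $\sfD^b(X)$: not only do the $[L_i]$ span $\sfK_0(X)$, but $\HH_*(\mcA) = \bbC^{13}$ concentrated in degree $0$ already matches $\HH_*(X)$, since $X$ is a rational surface with $h^{1,1}(X) = 11$. Non-fullness must therefore be detected by a genuinely categorical, cohomology-level phenomenon, and the whole argument hinges on a delicate non-vanishing for a specific line bundle on $X$ whose truth reflects the \emph{very} general (rather than merely general) position of the ten points, and clarifies why the integer $10$, rather than $9$ or fewer, is the critical threshold at which fullness first fails.
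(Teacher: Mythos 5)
Your outline of the first two parts matches the paper's strategy, but there are two genuine gaps, one in each of the remaining parts.

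First, in the exceptionality step, you assert that the required vanishings $h^0(\Delta)=0$ follow from general position ``by a dimension count.'' For ten points this is exactly the content of the SHGH conjecture, which is open in general: general position guarantees that $h^0$ is minimal among configurations, not that it equals $\max(0,\chi)$. The divisors that actually need to be handled here are $-F=19H-6\sum E_j$, $-2F=38H-12\sum E_j$, $-D_i$, $D_i-F$, and $D_i-2F$; all have $\chi=0$, so one must genuinely rule out unexpected sections. The paper does this by observing the divisors are in standard form (so no $(-1)$-curve meets them in degree $\le -2$) and then invoking the known cases of SHGH: the result of Dumnicki--Jarnicki for multiplicities $\le 11$ covers all but $-2F$, and the multiplicity-$12$ case $-2F$ requires the theorem of Ciliberto--Miranda on homogeneous interpolation on ten points. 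Without citing such results your argument is circular, since ``the dimension count works for general points'' is precisely what is unproven.

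Second, and more seriously, your non-fullness argument is not an argument: you propose to mutate an auxiliary line bundle into $\mcA^\perp$ and show the result is non-zero, and you yourself identify this as ``the central obstacle'' without resolving it. The paper takes a different and much cheaper route via Kuznetsov's height. Since all the difference divisors $D_i$, $F$, $2F$, $D_j-D_i=E_i-E_j$, $F-D_i$, $2F-D_i$ have negative degree or are of the form $E_i-E_j$, every degree-zero $\Hom$ between distinct members of the collection vanishes, so the anticanonical pseudoheight satisfies $\mathrm{ph}_{\mathrm{ac}}>-2$, hence $\mathrm{h}>0$, and Kuznetsov's criterion (\cite[Prop.~6.1]{kuznetsov_height_of_exceptional_collections_and_hochschild_cohomology_of_quasiphantom_categories}) shows the collection is not full. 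Note that this step uses only trivial cohomology vanishings; the hard geometric input (SHGH-type results) is needed for exceptionality, not for non-fullness. Your closing remark that the phenomenon requires \emph{very} general position is also off: the paper works with general position throughout.
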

It was previously shown in \cite[Thm.~6.35]{pirozhkov_admissible_subcategories_of_del_pezzo_surfaces_new} that a del Pezzo surface $Y$ does not admit a phantom in $\sfD^b(Y)$. Moreover, we showed in earlier work \cite{krah_mutationa_of_num_exc_collections_on_surfaces} that on the blow-up of $\bbP^2_\bbC$ in $9$ points in very general position every exceptional collection of maximal length consisting of line bundles is full. We discovered the exceptional collection \cref{eq:the_collection_introduction} while trying to increase the number of blown up points in \cite[Thm.~1.3]{krah_mutationa_of_num_exc_collections_on_surfaces}.

Any blow-up of $\bbP^2_\bbC$ in a finite set of points admits a full exceptional collection, \cref{thm:main_thm} disproves the following conjecture of Kuznetsov:
\begin{conjecture}[{\cite[Conj.~1.10]{kuznetsov_semiorthogonal_decompositions_in_algebraic_geometry}}]
	Let $\mcT=\langle E_1, \dots, E_n\rangle$ be a triangulated category generated by an exceptional collection. Then any exceptional collection of length $n$ in $\mcT$ is full.
\end{conjecture}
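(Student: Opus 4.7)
The plan is to show that any exceptional collection $F_1,\dots,F_n$ of length $n$ in $\mcT$ generates the whole category. Since an exceptional collection of finite length in a reasonable triangulated category is admissible, $F_1,\dots,F_n$ yields a semiorthogonal decomposition $\mcT = \langle \mcA, F_1,\dots,F_n\rangle$ with $\mcA \coloneqq \langle F_1,\dots,F_n\rangle^\perp$, so the goal reduces to proving $\mcA = 0$.

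First I would attack this via the Grothendieck group. Because $\mcT$ is generated by $E_1,\dots,E_n$, one has $\sfK_0(\mcT) \cong \bbZ^n$, and additivity under semiorthogonal decompositions gives $\sfK_0(\mcA) \oplus \sfK_0(\langle F_1,\dots,F_n\rangle) \cong \sfK_0(\mcT)$. Since the second summand is itself $\bbZ^n$, this immediately forces $\sfK_0(\mcA) = 0$. One can further check that exceptionality and mutual semiorthogonality of the $F_i$ make the Gram matrix of $[F_1],\dots,[F_n]$ with respect to the Euler pairing upper unitriangular, confirming that these classes form a $\bbZ$-basis of $\sfK_0(\mcT)$.

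The core step is then to upgrade $\sfK_0(\mcA) = 0$ to $\mcA = 0$. Two natural routes suggest themselves. The first is mutation theory: one would try to show that any two exceptional collections of maximal length in $\mcT$ are related by a chain of braid-type mutations — as is classical for $\sfD^b(\bbP^n)$ and is established for del Pezzo surfaces and for blow-ups of $\bbP^2_\bbC$ in few points — so that fullness is transported from $E_1,\dots,E_n$ to $F_1,\dots,F_n$. The second is to invoke finer additive invariants of $\mcA$ (Hochschild homology, topological $\sfK$-theory, noncommutative motives) and argue that any admissible subcategory on which all of these vanish must itself be zero.

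The hard part — and the one that actually sinks this strategy — is ruling out a \emph{phantom}: a nonzero admissible subcategory $\mcA$ with $\sfK_0(\mcA) = 0$, and potentially with trivial Hochschild invariants as well. No known numerical obstruction excludes phantoms in general, and in fact \cref{thm:main_thm} exhibits precisely such an $\mcA$: on the blow-up of $\bbP^2_\bbC$ at ten general points the semiorthogonal complement of the displayed collection \cref{eq:the_collection_introduction} is a nonzero phantom. Thus the conjecture as stated is false, and any proof strategy along the lines above must break down at exactly the step of excluding phantoms.
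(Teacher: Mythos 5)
You have correctly identified that this statement is not a theorem of the paper but the very conjecture the paper refutes: the paper offers no proof of it, and instead \cref{thm:main_thm} produces a nonzero admissible subcategory $\mcA$ with $\sfK_0(\mcA)=0$ orthogonal to an exceptional collection of maximal length on the blow-up of $\bbP^2_\bbC$ in ten general points. Your analysis of where any proof attempt must fail --- the $\sfK_0$ argument only yields $\sfK_0(\mcA)=0$, and no known additive invariant or mutation-transitivity result upgrades this to $\mcA=0$ in general --- is exactly the right diagnosis; indeed the paper notes that mutation-transitivity does hold for del Pezzo surfaces and for nine very general points, but the example at ten points shows the braid group action is not transitive there, so the ``transport fullness by mutations'' route is closed off precisely at this case. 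Your conclusion that the conjecture is false is correct and consistent with the paper.
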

As a consequence, the right- or left-orthogonal complement of the collection is a phantom category. Hence, this disproves the following conjecture of Orlov:
\begin{conjecture}[{\cite[Conj.~3.7]{orlov_finite_dim_dg_algebras_and_their_geometric_realizations}}] 
	There are no phantoms of the form $\prfdg \mathcal{R}$, where $\mathcal{R}$ is a smooth finite-dimensional dg-algebra and $\prfdg \mathcal{R}$ is the dg-category of perfect dg-modules over $\mathcal{R}$.
\end{conjecture}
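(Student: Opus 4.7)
The proof has three strands: exceptionality, maximal length, and non-fullness. A useful preparatory reformulation uses the canonical class $K_X = -3H + \sum_i E_i$ (with $K_X^2 = -1$), which recasts the divisors as $D_i = 2K_X - E_i$, $F = 6K_X - H$, and $2F = 12K_X - 2H$, so the collection consists of twists of powers of $\omega_X$ by simple divisors. Maximal length is immediate: $\sfK_0(\sfD^b(X))$ is free of rank $2 + \rk\Pic(X) = 13$, matching the number of objects in the collection.

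For exceptionality, I would verify $H^*(X, \mcO_X(D_a - D_b)) = 0$ for every ordered pair $a < b$ in the collection (with the convention $D_0 = 0$, $D_{11} = F$, $D_{12} = 2F$). Riemann--Roch shows $\chi = 0$ in every case, and Serre duality then reduces the cohomological vanishing to a pair of $H^0$-vanishings per divisor pair. Each such $H^0$-vanishing translates to the emptiness of a specific linear system of plane curves of given degree with prescribed multiplicities at the $10$ general points $p_1, \ldots, p_{10}$. I would deduce emptiness either from the known cases of the Harbourne--Hirschowitz conjecture for $10$ general points, or by explicit Cremona transformations that push each problematic linear system to one of negative degree on $\bbP^2$.

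The main content is non-fullness. Writing $L_a$ for the 13 line bundles of the collection, the matrix $(\chi(L_a, L_b))$ of the Euler pairing is upper-triangular with $1$'s on the diagonal, so these classes form a $\bbZ$-basis of $\sfK_0(X)$. Hence any object $A$ in the right orthogonal complement satisfies $[A] = 0$ in $\sfK_0(X)$, so producing a nonzero such $A$ automatically exhibits a phantom. To construct $A$, I would take the right-orthogonal projection of a carefully chosen test object, most naturally a line bundle $\mcO_X(G)$ for a suitable divisor $G$, realized as an iterated cone of coevaluation maps under successive right mutations through the 13 members of the collection. By construction this complex lies in the orthogonal; the theorem reduces to verifying that it is nonzero.

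The heart of the argument is therefore this last nonvanishing: a priori, iterated mutation could collapse to zero, and distinguishing this from the phantom case requires tracking cohomology dimensions $h^i(X, \mcO_X(G - D_a))$ through every stage of the projection. The specific integer coefficients in $D_i$ and $F$ must be calibrated so that the alternating cancellations in the successive cones leave a nontrivial residue — this is exactly the combinatorial input that distinguishes the present construction from nearby ones which produce full collections. The main obstacle I anticipate is the explicit dimensional control of higher $\Ext$-groups at each mutation step, rather than any overarching conceptual difficulty.
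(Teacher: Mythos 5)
Your treatment of exceptionality and of maximal length coincides with the paper's: the collection is numerically exceptional by construction, the vanishing of $\Hom$ and $\Ext^2$ reduces via Serre duality to $h^0$-vanishings for a short list of divisors in standard form, and these are settled by the known cases of the SHGH conjecture (multiplicities at most $11$ in all cases except $-2F$, which is covered by Ciliberto--Miranda); maximal length follows from $\rk \sfK_0(X)=13$. Your identities $D_i=2K_X-E_i$ and $F=6K_X-H$ are correct and consistent with the paper's construction of the collection as the image of $(\mcO_X,\mcO_X(E_1),\dots,\mcO_X(2H))$ under the isometry that is $-\id$ on $K_X^{\perp}$ and fixes $K_X$.

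The non-fullness step, however, has a genuine gap, and it sits exactly where you locate ``the heart of the argument.'' You propose to project a test line bundle $\mcO_X(G)$ into the orthogonal complement $\mcA$ by thirteen successive mutations and to certify that the result is nonzero by tracking the cohomology dimensions $h^i(X,\mcO_X(G-D_a))$ through the cones. But if the construction succeeds at all, $\mcA$ is a phantom, so every additive invariant of the projected object vanishes: its class in $\sfK_0(X)$ is zero, every Euler pairing $\chi(L,-)$ against it is zero, and the alternating sum of the classes of its cohomology sheaves is zero. Dimension bookkeeping alone therefore cannot distinguish the projection from the zero object; you would need to control the actual connecting morphisms in each of the thirteen cones and exhibit one specific nonzero $\Ext$-group, and you give no mechanism for doing so. The paper avoids this entirely via Kuznetsov's height: by \cref{prop:height_zero_if_coll_full} a collection of positive height is not full, by \cref{lem:height_bigger_than_pseudoheight} the height is bounded below by the pseudoheight, and the anticanonical pseudoheight is computed purely from the numbers $\mathrm{e}(E_a,E_b)$ for members of the collection itself --- no projection or mutation is ever performed. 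Since the divisors $D_i$, $F$, $2F$, $F-D_i$, $2F-D_i$ all have negative degree on $H$ and $D_j-D_i=E_i-E_j$ is never effective, one gets $\hom(E_a,E_b)=0$, hence $\mathrm{e}(E_a,E_b)\geq 1$ for all $a<b$, so $\mathrm{ph}_{\mathrm{ac}}>-2=-\dim X$ and non-fullness follows. You should replace the projection argument with this criterion; as it stands, your outline reduces the theorem to a computation you have not shown to be feasible.
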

Note that if $\mcA$ is an admissible subcategory of $\sfD^b(X)$ and $\sfD^b(X)$ admits a full exceptional collection, then by \cite[Cor.~3.4]{orlov_finite_dim_dg_algebras_and_their_geometric_realizations} $\mcA$ has a dg-enhancement quasi-equivalent to $\prfdg \mathcal{R}$, where $\mathcal{R}$ is a smooth finite-dimensional dg-algebra.

Recently, Chang--Haiden--Schroll gave an example of a triangulated category admitting a full exceptional collection such that the braid group action by mutations does not act transitively on the set of full exceptional collections up to shifts \cite{chang_haiden_schroll_braid_group_actions_on_branched_coverings_and_full_exceptional_sequences}.
Since mutations of exceptional collections do not change the generated subcategory, our example provides a surface where the braid group does not act transitively on the set of exceptional collections of maximal length.

As mentioned to us by Kalck, our example further shows the existence of a triangulated category which admits a tilting object and a presilting object which is not a direct summand of a silting object. Indeed the full exceptional collection $(\mcO_X, \mcO_X(E_1), \dots, \mcO_X(E_{10}), \mcO_X(H), \mcO_X(2H))$ is strong, so that $\sfD^b(X)$ admits a tilting object.
Moreover, the object
$$P\coloneqq \mcO_X \oplus \mcO_X(D_1)[2] \oplus \dots \oplus \mcO_X(D_{10})[2] \oplus \mcO_X(F)[4] \oplus \mcO_X(2F)[6] \in \sfD^b(X),$$
is presilting since it satisfies $\Hom_{\sfD^b(X)} (P,P[i])=0$ for all $i>0$, but not silting since $P$ does not generate $\sfD^b(X)$. Finally, since $\sfD^b(X)$ is a Krull--Schmidt category, one deduces from \cite[Thm.~2.27]{aihara_iyama_silting_mutations_in_triangulated_categories} that $P$ is not the direct summand of a silting object.
Another such example was previously constructed in \cite{liu_zhou_a_negative_answer_to_complement_question_for_presilting_complexes}, but our example has the additional property that the number of non-isomorphic indecomposable summands of $P$ is equal to $\rk \sfK_0(X)$.

\begin{ack}
	This work is part of the author's dissertation, supervised by Charles Vial whom we wish to thank for helpful discussions and explanations. We discovered the existence of the exceptional collection \cref{eq:the_collection_introduction} in the context of our previous work \cite{krah_mutationa_of_num_exc_collections_on_surfaces}, where we study the transitivity of the braid group action on (numerically) exceptional collections on surfaces using a classification obtained by Vial in \cite{vial_exceptional_collections_and_the_neron_severi_lattice_for_surfaces}.
\end{ack}

\section{Exceptional Collections}
We recall the basic definitions and properties of exceptional collections and semiorthogonal decompositions. For a detailed reference we refer to \cite{kuznetsov_semiorthogonal_decompositions_in_algebraic_geometry} and the references therein.

Let $X$ be a smooth projective variety over $\bbC$ and denote by $\sfD^b(X)$ the bounded derived category of coherent sheaves on $X$.
A \emph{semiorthogonal decomposition} of $\sfD^b(X)$ is an ordered collection $(\mcA_1, \dots, \mcA_n )$ of full triangulated subcategories such that
\begin{equation*}
	\Hom_{\sfD^b(X)}(A_i, A_j)=0 \; \text{for all} \; A_i \in \mcA_i, A_j \in \mcA_j, j<i
\end{equation*}
and the smallest triangulated subcategory of $\sfD^b(X)$ containing $\mcA_1, \dots, \mcA_n$ is $\sfD^b(X)$. We write 
$$\sfD^b(X)=\langle \mcA_1, \dots, \mcA_n \rangle$$
for such a semiorthogonal decomposition.
A full triangulated subcategory $\mcA \subseteq \sfD^b(X)$ is called \emph{admissible} if the inclusion functor $\mcA \hookrightarrow \sfD^b(X)$ admits both a right and a left adjoint. Such an admissible subcategory gives rise to the semiorthogonal decompositions $\sfD^b(X)=\langle \mcA^\perp, \mcA \rangle = \langle \mcA, {^\perp \mcA} \rangle$, where
\begin{align*}
	^\perp \mcA &\coloneqq \{F \in \sfD^b(X) \mid \Hom_{\sfD^b(X)}(F, A)=0 \; \text{for all} \; A \in \mcA\} \\
	\text{and} \; \mcA^\perp &\coloneqq \{F \in \sfD^b(X) \mid \Hom_{\sfD^b(X)}(A, F)=0 \; \text{for all} \; A \in \mcA\}
\end{align*}
are the \emph{left-} and \emph{right-orthogonal complements} of $\mcA$. If $\mcA$ is admissible, so are $^\perp \mcA$ and $\mcA ^\perp$.
An object $E \in \sfD^b(X)$ is called \emph{exceptional} if $\Hom_{\sfD^b(X)}(E,E)= \bbC$ and $\Hom_{\sfD^b(X)}(E,E[k])=0$ for all $k\neq 0$.
A collection $(E_1, \dots, E_n)$ of exceptional objects is called an \emph{exceptional collection} if
$$\sfD^b(X)=\langle \mcA,  E_1, \dots,  E_n \rangle,$$
where $\mcA = \langle E_1, \dots, E_n \rangle ^\perp$ is a semiorthogonal decomposition. Note that, by abuse, we denote the subcategory generated by the object $E_i$ also by $E_i$. An exceptional collection $(E_1, \dots, E_n)$ is \emph{full} if $\mcA=0$.
A subcategory generated by an exceptional collection $\langle E_1, \dots, E_n \rangle$ is always admissible; in particular, its left- and right-orthogonal complements are again admissible.

A semiorthogonal decomposition $\sfD^b(X)=\langle \mcA_1, \dots, \mcA_n \rangle$ yields a direct sum decomposition of the Grothendieck group of $\sfD^b(X)$:
$$ \sfK_0(X)=\sfK_0(\mcA_1) \oplus \dots \oplus \sfK_0(\mcA_n).$$
An exceptional collection $(E_1, \dots, E_n)$ is of \emph{maximal length} if there exists no further exceptional object $F \in \sfD^b(X)$ such that $(E_1, \dots, E_n, F)$ is an exceptional collection.
Because $\langle E_i \rangle \cong \sfD^b(\Spec \bbC)$ for an exceptional object $E_i$, we have $\sfK_0(\langle E_i \rangle) =\bbZ[E_i]$. Thus, if $\sfK_0(X)$ is finitely generated as an abelian group and $n  = \rk \sfK_0(X)$, then any exceptional collection of length $n$ is of maximal length.

Assume that $\sfK_0(X)$ is finitely generated and $(E_1, \dots E_n)$ is an exceptional collection of length $n=\rk \sfK_0(X)$. The additivity of $\sfK_0$ among semiorthogonal decompositions implies that $\sfK_0(\mcA)=\mathrm{tors}(\sfK_0(X))$ is a finite group, where $\mcA = \langle E_1, \dots, E_n \rangle ^\perp$. If $\mcA \subseteq \sfD^b(X)$ is a nonzero admissible subcategory with finite $\sfK_0(\mcA)$, then by definition $\mcA$ is a \emph{quasi phantom} and if additionally $\sfK_0(\mcA)=0$, then $\mcA$ is called a \emph{phantom}.

Let $\mcA \subseteq \sfD^b(X)$ and $\mcB \subseteq \sfD^b(Y)$ be triangulated full subcategories. Then $\mcA \boxtimes \mcB \subseteq \sfD^b(X\times Y)$ denotes the smallest triangulated full subcategory of $\sfD^b(X\times Y)$ which is closed under direct summands and contains all objects of the form $p_X^*A\otimes^{\mathrm{L}} p_Y^*B$ for $A \in \mcA$ and $B \in \mcB$.
Following \cite[Def.~1.9]{gorchinskiy_orlov_geometric_phantom_categories} an admissible subcategory $\mcA \subseteq \sfD^b(X)$ is called a \emph{universal phantom} if for all smooth projective varieties $Y$ the category $\mcA \boxtimes \sfD^b(Y)$ is a phantom.

\section{SHGH Conjecture}\label{se:shgh_conj}
Let $X$ be the blow-up of the projective plane $\bbP^2_\bbC$ in a set of closed points $p_1, \dots, p_n \in \bbP^2_\bbC$. Denote by $E_i \subseteq X$ the $(-1)$-curve over the point $p_i$ and recall that $\Pic(X)=\bbZ H \oplus \bbZ E_1 \oplus \dots \oplus \bbZ E_n$, where $H$ is the pullback of a hyperplane in $\bbP^2_\bbC$.
The class of a divisor $D$ on $X$ can be uniquely written as
$$D=dH-\sum_{i=1}^n m_i E_i$$
for some $d, m_i \in \bbZ$.
Moreover, the intersection product satisfies $H^2=1$, $E_i^2=-1$, $H\cdot E_i=0$, and $E_i \cdot E_j=0$ for all $i \neq j$.
If $d>0$ and $m_i \geq 0$, the space of global sections $H^0(X, \mcO_X(D))$ can be identified with the space of homogeneous polynomials $P \in \bbC[X, Y,Z]$ of degree $d$ such that $P$ vanishes to order $\geq m_i$ at $p_i$. If the points are chosen in general position, meaning that $h^0(D)\coloneqq \dim H^0(X, \mcO_X(D))$ is minimal, then the following conjecture due to Segre--Harbourne--Gimigliano--Hirschowitz predicts the value of $h^0(D)$.
\begin{conjecture}[SHGH]
	Let $X$ be the blow-up of $\bbP^2_\bbC$ in $n$ points in general position and let $d>0, m_i\geq 0$ be integers. Let $D\coloneqq dH-\sum_{i=1}^n m_iE_i$, then 
	\begin{equation*}
		\dim H^0(X, \mcO_X(D)) = \max (0, \chi(X, \mcO_X(D)))
	\end{equation*}
	or there exists a $(-1)$-curve $C\subseteq X$ such that $C\cdot D \leq -2$.
\end{conjecture}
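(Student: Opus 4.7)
The SHGH conjecture is a famous open problem, and so what follows is a sketch of the standard strategy in the literature rather than a route I expect to carry through to completion. The starting point is that on the blow-up of $\bbP^2$ at $n\ge 3$ points in general position, the Cremona transformations centered at triples $\{p_i,p_j,p_k\}$ act on $\Pic(X)$ by reflections in the $(-1)$-classes $H - E_i - E_j - E_k$, generating a copy of the Weyl group associated with the $E_n$ diagram. This action preserves $\chi(X,\mcO_X(D))$ and, after removing any fixed $(-1)$-curve components of the base locus, also preserves $h^0(X,\mcO_X(D))$. The plan is to use this action to reduce an arbitrary class $D = dH - \sum_{i=1}^n m_iE_i$ with $d>0$ and $m_i\ge 0$ to a ``standard'' representative lying in a fundamental domain of the action.

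Having done so, I would split into cases. If the standard representative has $d\le 0$, its $h^0$ is read off directly from the formula. If $d>0$ and the standard class is nef, then standard cohomology vanishing on rational surfaces (together with Serre duality to kill $h^2$, and e.g.\ Kawamata--Viehweg applied to $D - K_X$ to kill $h^1$) gives $h^i(D)=0$ for $i>0$ and hence $h^0=\chi$. The remaining case --- $d>0$, the standard class effective but not nef, and no $(-1)$-curve $C$ satisfying $C\cdot D\le -2$ --- would be attacked by proving that the fixed part of $|D|$ decomposes into $(-1)$-curves, each appearing with the multiplicity forced by its intersection with $D$; one would then peel these off to reduce to the nef case.

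The main obstacle is exactly this last base-locus analysis, and it is the reason the conjecture is open. For $n\le 9$ points one lies on a (weak) del Pezzo surface, the classes of irreducible negative curves form a finite set consisting of $(-1)$-curves, and the conjecture is known. For $n\ge 10$ the anticanonical class ceases to be nef, infinitely many $(-1)$-curves appear, and one cannot \emph{a priori} exclude further contributions to the base locus from curves with more negative self-intersection on special loci in the moduli of point configurations. The available tools --- Nagata-type specialization, the Ciliberto--Miranda degeneration of $\bbP^2$ to a union of planes, and Harbourne's analysis of unique effective decompositions --- each handle restricted ranges of multiplicities but do not combine to rule out all non-$(-1)$-curve obstructions in general. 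This gap is precisely what has kept SHGH open, and overcoming it is where I expect the main difficulty to lie.
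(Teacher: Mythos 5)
This statement is labeled a conjecture in the paper, and the paper gives no proof of it; you are right that SHGH is open for $n\ge 10$ general points, and your refusal to claim a complete argument is the correct call. What the paper actually relies on is only the proven special cases: the theorem of Dumnicki--Jarnicki covering all multiplicities $m_i\le 11$ (which handles every divisor in the verification except $-2F$), and the theorem of Ciliberto--Miranda settling the case of ten points with equal multiplicity, which covers $-2F=38H-12\sum_{j=1}^{10}E_j$; it also uses the standard-form criterion to rule out $(-1)$-curves $C$ with $C\cdot D\le -2$. Your sketch of the Cremona/Weyl-group reduction to standard form, the nef case via vanishing, and the identification of the base-locus decomposition as the genuine open obstruction is an accurate account of the state of the art, but it is a survey of a strategy rather than a proof --- which is all that can be offered here, and all the paper offers as well.
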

If the divisor $D=dH-\sum_{i=1}^n m_iE_i$ is in \emph{standard form}, i.e.\ $d\geq m_1\geq \dots \geq m_n$ and $d-m_1-m_2-m_3 \geq 0$, it is known that there exist no $(-1)$-curve $C$ such that $D\cdot C \leq -2$; see, e.g., \cite[Prop.~1.4]{ciliberto_miranda_homogeneous_interpolation_on_ten_points}.
The SHGH Conjecture is known to be true in various cases of low multiplicity. For example in \cite{dumnicki_jarnicki_new_effective_bounds_on_the_dimension_of_a_linear_system_in_p2} the conjecture is verified for divisors with all multiplicities $m_i \leq 11$.
Alternatively, for a single explicit divisor $D$ it is possible to compute the actual value of $h^0(D)$ using a computer.
We will use these computations of $h^0(D)$ to show that the collection in \cref{thm:main_thm} is exceptional.

\section{Height and Pseudoheight of Exceptional Collections}
Kuznetsov introduced in \cite{kuznetsov_height_of_exceptional_collections_and_hochschild_cohomology_of_quasiphantom_categories} the so-called \emph{height} of an exceptional collection $\langle E_1, \dots, E_n \rangle \subseteq \sfD^b(X)$:
If $\mathcal{D}$ is a smooth and proper dg-category and $\mathcal{B} \subseteq \mathcal{D}$ a dg-subcategory, Kuznetsov defines the \emph{normal Hochschild cohomology} $\mathrm{NHH}^\bullet(\mathcal{B}, \mathcal{D})$ of $\mathcal{B}$ in $\mathcal{D}$ as a certain dg-module \cite[Def.~3.2]{kuznetsov_height_of_exceptional_collections_and_hochschild_cohomology_of_quasiphantom_categories}. The height of an exceptional collection $(E_1, \dots, E_n)$ is then defined as
$$\mathrm{h}(E_1, \dots, E_n)\coloneqq \min\{k \in \bbZ \mid \mathrm{NHH}^k(\mathcal{E}, \mathcal{D})\neq 0 \}$$
where $\mathcal{D}$ is a dg-enhancement of $\sfD^b(X)$ and $\mathcal{E}$ the dg-subcategory of $\mathcal{D}$ generated by the exceptional objects $(E_1, \dots, E_n)$.
In general, the normal Hochschild cohomology $\mathrm{NHH}^\bullet (\mathcal{E}, \mathcal{D})$ can be computed using a spectral sequence \cite[Prop.~3.7]{kuznetsov_height_of_exceptional_collections_and_hochschild_cohomology_of_quasiphantom_categories}.
For our purpose it will be sufficient to consider a coarser invariant of an exceptional collection, the so-called \emph{pseudoheight}.
\begin{definition}[{\cite[Def.~4.4, Def.~4.9]{kuznetsov_height_of_exceptional_collections_and_hochschild_cohomology_of_quasiphantom_categories}}]
	For any two objects $F, F' \in \sfD^b(X)$ define the \emph{relative height} as
	$$\mathrm{e}(F, F')\coloneqq  \inf \{k \in \bbZ \mid \Ext^k(F, F') \neq 0 \}.$$
	For an exceptional collection $(E_1, \dots, E_n)$ the \emph{pseudoheight} is
	\begin{equation*}
		\mathrm{ph}(E_1, \dots, E_n)\coloneqq  \min_{1\leq a_0 <\dots <a_p \leq n}  \left (  \mathrm{e}(E_{a_0}, E_{a_1}) + \dots + \mathrm{e}(E_{a_{p-1}}, E_{a_p}) +\mathrm{e}(E_{a_p}, S^{-1}(E_{a_0})) - p \right),
	\end{equation*}
	where $S=-\otimes \omega_X[\dim X]$ is the Serre functor of $\sfD^b(X)$.
	The \emph{anticanonical pseudoheight} is
	\begin{equation*}
		\mathrm{ph}_{\mathrm{ac}}(E_1, \dots, E_n)\coloneqq  \min_{1\leq a_0 <\dots <a_p \leq n}  \left (  \mathrm{e}(E_{a_0}, E_{a_1}) + \dots + \mathrm{e}(E_{a_{p-1}}, E_{a_p}) +\mathrm{e}(E_{a_p}, E_{a_0}\otimes \omega_X^{-1}) - p \right).
	\end{equation*}
\end{definition}
Clearly, $\mathrm{ph}_{\mathrm{ac}} = \mathrm{ph}- \dim X$.
\begin{lemma}[{\cite[Lem.~4.5]{kuznetsov_height_of_exceptional_collections_and_hochschild_cohomology_of_quasiphantom_categories}}]\label{lem:height_bigger_than_pseudoheight}
	For an exceptional collection $(E_1, \dots, E_n )$ in $\sfD^b(X)$ we have $\mathrm{h}(E_1, \dots, E_n) \geq \mathrm{ph}(E_1, \dots, E_n)$.
\end{lemma}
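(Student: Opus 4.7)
The plan is to deduce this directly from the spectral sequence that computes the normal Hochschild cohomology, which is cited in the preceding paragraph as \cite[Prop.~3.7]{kuznetsov_height_of_exceptional_collections_and_hochschild_cohomology_of_quasiphantom_categories}. The height $\mathrm{h}(E_1,\dots,E_n)$ is by definition the lowest degree in which $\mathrm{NHH}^\bullet(\mathcal{E},\mathcal{D})$ is nonzero, so it suffices to show $\mathrm{NHH}^k(\mathcal{E},\mathcal{D})=0$ for every $k<\mathrm{ph}(E_1,\dots,E_n)$.

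First I would recall Kuznetsov's spectral sequence. Since $(E_1,\dots,E_n)$ is an exceptional collection, there is a spectral sequence whose $E_1$ page in cohomological bidegree $(p,q)$ has the form
\begin{equation*}
E_1^{p,q} = \bigoplus_{1\le a_0<a_1<\dots<a_p\le n}\; \bigoplus_{k_0+\dots+k_p=q+p}\, \Ext^{k_0}(E_{a_0},E_{a_1})\otimes \dots \otimes \Ext^{k_{p-1}}(E_{a_{p-1}},E_{a_p})\otimes \Ext^{k_p}(E_{a_p},S^{-1}(E_{a_0})),
\end{equation*}
converging to $\mathrm{NHH}^{p+q}(\mathcal{E},\mathcal{D})$. (The exact indexing conventions are the ones fixed in \emph{loc.\ cit.}; the essential point is that the total degree of a summand equals the sum of the $\Ext$-degrees appearing minus $p$.)

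Next I would read off the vanishing bound from the definition of pseudoheight. A nonzero summand of $E_1^{p,q}$ forces each $\Ext^{k_i}$ to be nonzero, so $k_i\ge \mathrm{e}(E_{a_i},E_{a_{i+1}})$ for $0\le i<p$ and $k_p\ge \mathrm{e}(E_{a_p},S^{-1}(E_{a_0}))$. Adding these inequalities and subtracting $p$ gives
\begin{equation*}
p+q \;=\; \sum_{i=0}^{p} k_i - p \;\ge\; \mathrm{e}(E_{a_0},E_{a_1})+\dots+\mathrm{e}(E_{a_{p-1}},E_{a_p})+\mathrm{e}(E_{a_p},S^{-1}(E_{a_0}))-p \;\ge\; \mathrm{ph}(E_1,\dots,E_n),
\end{equation*}
by the very definition of the pseudoheight. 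Hence $E_1^{p,q}=0$ whenever $p+q<\mathrm{ph}(E_1,\dots,E_n)$, which implies $E_\infty^{p,q}=0$ in the same range, and therefore $\mathrm{NHH}^{k}(\mathcal{E},\mathcal{D})=0$ for $k<\mathrm{ph}(E_1,\dots,E_n)$. This yields the claimed inequality $\mathrm{h}(E_1,\dots,E_n)\ge \mathrm{ph}(E_1,\dots,E_n)$.

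The only real subtlety, and therefore the step I would handle most carefully, is matching Kuznetsov's sign and shift conventions in the spectral sequence so that the total degree on $E_1^{p,q}$ is identified with the quantity appearing in the definition of $\mathrm{ph}$ (in particular, the $-p$ term); once this bookkeeping is settled, the argument is a purely formal vanishing estimate. Since the spectral sequence itself is imported from \cite{kuznetsov_height_of_exceptional_collections_and_hochschild_cohomology_of_quasiphantom_categories}, in the paper I would simply cite \cite[Prop.~3.7]{kuznetsov_height_of_exceptional_collections_and_hochschild_cohomology_of_quasiphantom_categories} and give the two-line degree estimate above.
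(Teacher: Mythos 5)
Your argument is correct and is precisely the proof given in the cited source: the paper itself imports this lemma by reference to \cite[Lem.~4.5]{kuznetsov_height_of_exceptional_collections_and_hochschild_cohomology_of_quasiphantom_categories} without reproving it, and Kuznetsov's proof is exactly your degree estimate on the first page of the spectral sequence of \cite[Prop.~3.7]{kuznetsov_height_of_exceptional_collections_and_hochschild_cohomology_of_quasiphantom_categories}, whose total degree is the sum of the $\Ext$-degrees minus $p$ and hence bounded below by the pseudoheight. Your bookkeeping of the $-p$ shift matches Kuznetsov's conventions, so nothing further is needed.
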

We will use the following criterion to show that the exceptional collection in \cref{thm:main_thm} is not full.
\begin{proposition}[{\cite[Prop.~6.1]{kuznetsov_height_of_exceptional_collections_and_hochschild_cohomology_of_quasiphantom_categories}}]\label{prop:height_zero_if_coll_full}
	Let $X$ be a smooth projective variety and $(E_1, \dots, E_n)$ in $\sfD^b(X)$ an exceptional collection. If $\mathrm{h}(E_1, \dots, E_n) >0$, then $( E_1, \dots, E_n)$ is not full.
\end{proposition}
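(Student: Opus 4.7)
I would prove the contrapositive: assuming $(E_1,\dots,E_n)$ is full, I construct a nonzero class in $\mathrm{NHH}^k(\mathcal{E},\mathcal{D})$ for some $k \leq 0$, whence $\mathrm{h}(E_1,\dots,E_n) \leq 0$. Here $\mathcal{D}$ denotes a fixed dg-enhancement of $\sfD^b(X)$ and $\mathcal{E} \subseteq \mathcal{D}$ the dg-subcategory generated by the exceptional objects.

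The strategy exploits the fact that a full exceptional collection yields a quasi-equivalence $\prfdg \mathcal{E} \simeq \mathcal{D}$, equipping the identity functor on $\mathcal{D}$ with a canonical description in terms of the $E_i$. I would work with the spectral sequence of \cite[Prop.~3.7]{kuznetsov_height_of_exceptional_collections_and_hochschild_cohomology_of_quasiphantom_categories} converging to $\mathrm{NHH}^\bullet(\mathcal{E},\mathcal{D})$, whose $E_1$-page is assembled from tensor products of the $\Ext$-groups $\Ext^\bullet(E_{a_j},E_{a_{j+1}})$ and $\Ext^\bullet(E_{a_p},S^{-1}(E_{a_0}))$ appearing in the definition of the pseudoheight. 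Under fullness, the diagonal $\mathcal{O}_\Delta \in \sfD^b(X \times X)$ admits a Beilinson-type resolution in terms of kernels $E_i \boxtimes G_j$ built from the dual exceptional collection $(G_1,\dots,G_n)$, and pairing this resolution with the unit classes $1 \in \Hom(E_{a_j},E_{a_j}) = \bbC$ yields a distinguished element on the $E_1$-page of total cohomological degree $0$ that realises the class of $\mathrm{id}_\mathcal{D} \in \mathrm{HH}^0(\mathcal{D})$.

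The principal difficulty is to verify that this candidate is a permanent cocycle — closed under all higher differentials of the spectral sequence and not a coboundary — so that it gives a nonzero element of $\mathrm{NHH}^0(\mathcal{E},\mathcal{D})$. I expect this to follow from a direct compatibility check between the Beilinson-type decomposition and the bar-type complex underlying $\mathrm{NHH}^\bullet$, with the fullness hypothesis entering essentially in both the construction of the resolution of $\mathcal{O}_\Delta$ and the identification of the resulting class with the identity of $\mathcal{D}$. Without fullness, no such resolution of the diagonal exists and the construction of the candidate cocycle breaks down, which is consistent with the possibility that $\mathrm{NHH}^\bullet$ is concentrated in strictly positive degrees.
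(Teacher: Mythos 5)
The paper does not prove this proposition; it imports it verbatim from Kuznetsov's paper on heights, so the relevant comparison is with Kuznetsov's own argument. Your contrapositive set-up and your target --- producing a nonzero class in $\mathrm{NHH}^{0}(\mathcal{E},\mathcal{D})$ coming from the identity --- are morally the right ones, but the proposal has a genuine gap exactly where you flag the ``principal difficulty'': you never establish that your candidate class on the $E_1$-page survives the higher differentials and is nonzero in the abutment. That verification is not a routine compatibility check; in a first-quadrant-type spectral sequence a class of total degree $0$ can perfectly well be hit by a differential, and ruling this out by hand amounts to redoing the main homological work of Kuznetsov's paper. As written, the argument is a plausible strategy, not a proof.

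The missing ingredient is the structural result that makes all of this unnecessary: normal Hochschild cohomology sits in a distinguished triangle
\begin{equation*}
	\mathrm{NHH}^{\bullet}(\mathcal{E},\mathcal{D}) \lra \mathrm{HH}^{\bullet}(\mathcal{D}) \lra \mathrm{HH}^{\bullet}(\mathcal{A}),
\end{equation*}
where $\mathcal{A}=\langle E_1,\dots,E_n\rangle^{\perp}$ (this is the main comparison theorem of \cite{kuznetsov_height_of_exceptional_collections_and_hochschild_cohomology_of_quasiphantom_categories}, of which the spectral sequence you invoke is a consequence). If the collection is full then $\mathcal{A}=0$, so $\mathrm{HH}^{\bullet}(\mathcal{A})=0$ and the triangle gives $\mathrm{NHH}^{\bullet}(\mathcal{E},\mathcal{D})\cong \mathrm{HH}^{\bullet}(X)$. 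Since $\mathrm{HH}^{0}(X)=H^{0}(X,\mcO_X)=\bbC\cdot\id\neq 0$, one gets $\mathrm{h}(E_1,\dots,E_n)\leq 0$, which is the contrapositive of the statement. This route needs no resolution of the diagonal, no dual collection, and no cocycle bookkeeping; the fullness hypothesis enters only through the vanishing of $\mathrm{HH}^{\bullet}(\mathcal{A})$. I would recommend replacing your construction with this two-line deduction, or at minimum citing the comparison triangle as the lemma your permanent-cocycle claim would have to reprove.
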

In particular, if $\mathrm{ph}_{\mathrm{ac}}(E_1, \dots, E_n)>-\dim X$, then the collection is not full.

\section{Proof of \texorpdfstring{\cref{thm:main_thm}}{Theorem \ref{thm:main_thm}}}\label{sec:proof_of_thm}
Let $X$ be the blow-up of $\bbP^2_\bbC$ in $10$ points in general position.
A full exceptional collection in $\sfD^b(X)$ consisting of line bundles is given by
$$\sfD^b(X)=\langle \mcO_X, \mcO_X(E_1) , \dots, \mcO_X(E_{10}), \mcO_X(H), \mcO_X(2H) \rangle.$$
Since the canonical class $K_X=-3H+\sum_{i=1}^{10}E_i$ satisfies $K_X^2=-1$, the Picard lattice admits an orthogonal decomposition $\Pic(X)=K_X^\perp \oplus \bbZ K_X$ and one can compute that a basis of $K_X^\perp$ is given by $H-E_1- E_2-E_3, E_1-E_2, \dots, E_9-E_{10}$.
Consider the orthogonal transformation $\iota\colon \Pic(X) \to \Pic(X)$ which multiplies an element of $K_X^\perp$ by $-1$ and is the identity on $\bbZ K_X$.
We compute
\begin{equation*}
	D_i  \coloneqq \iota(E_i)=-6H + 2\sum_{j=1}^{10} E_j - E_i \quad \text{and} \quad
	F  \coloneqq \iota(H)=-19H+6\sum_{i=1}^{10}E_i.
\end{equation*} 
Since $\iota$ fixes the canonical class, one can deduce from the Riemann--Roch formula that
\begin{equation}\label{eq:interesting_collection} \tag{1}
	(\mcO_X, \mcO_X(D_1) , \dots, \mcO_X(D_{10}), \mcO_X(F), \mcO_X(2F))
\end{equation}
is a \emph{numerically exceptional collection}, i.e.\ it is semiorthogonal with respect to the Euler pairing 
$$\chi(F, G)\coloneqq \sum_{i \in \bbZ} (-1)^{i} \dim \Hom_{\sfD^b(X)}(F, G[i]) \; \text{for all} \; F, G \in \sfD^b(X) $$
and each object $F$ in the collection satisfies $\chi(F, F)=1$.
Moreover, it is clear that the image of \cref{eq:interesting_collection} is a basis of the Grothendieck group $\sfK_0(X)$, thus the collection is of maximal length.
\begin{proof}[Proof of the \cref{thm:main_thm}]
	We first verify that the collection \cref{eq:interesting_collection} is exceptional. Since the collection is numerically exceptional and consists of sheaves, it suffices to check the vanishing of $\Hom$- and $\Ext^2$-spaces. Via Serre duality, the computation of an $\Ext^2$-space can be done by computing global sections of a divisor. Thus, abbreviating $\hom(-,-)=\dim \Hom(-,-)$ and $\ext^k(-,-)=\dim \Ext^k(-,-)$, we have to show that the following dimensions are zero:
	\begin{align*}
		\hom(\mcO_X(2F), \mcO_X(F))&=h^0(-F),\\
		\ext^2(\mcO_X(2F), \mcO_X(F))&=h^2(-F)=h^0(K_X+F),\\
		\hom(\mcO_X(2F), \mcO_X(D_i))&=h^0(D_i-2F),\\
		\ext^2(\mcO_X(2F), \mcO_X(D_i))&=h^2(D_i-2F)=h^0(K_X-D_i+2F),\\
		\hom(\mcO_X(2F), \mcO_X)&=h^0(-2F),\\
		\ext^2(\mcO_X(2F), \mcO_X)&=h^2(-2F)=h^0(K_X+2F),\\
		\hom(\mcO_X(F), \mcO_X(D_i))&=h^0(D_i-F),\\
		\ext^2(\mcO_X(F), \mcO_X(D_i))&=h^2(D_i-F)=h^0(K_X-D_i+F),\\
		\hom(\mcO_X(F), \mcO_X)&=h^0(-F),\\
		\ext^2(\mcO_X(F), \mcO_X)&=h^2(-F)=h^0(K_X+F),\\
		\hom(\mcO_X(D_i), \mcO_X(D_j))&=h^0(D_j-D_i),\\
		\ext^2(\mcO_X(D_i), \mcO_X(D_j))&=h^2(D_j-D_i)=h^0(K_X-D_j+D_i),\\
		\hom(\mcO_X(D_i), \mcO_X)&=h^0(-D_i),\\
		\ext^2(\mcO_X(D_i), \mcO_X)&=h^2(-D_i)=h^0(K_X+D_i),
	\end{align*}
	where $1\leq i, j \leq 10$, $i\neq j$.
	The vanishing holds trivially if the divisor has negative degree or is of the form $D_j-D_i=E_i-E_j$. The remaining cases are
	\begin{align}\label{eq:divisors_to_check}
		-F&=19H-6\sum_{j=1}^{10}E_j,\quad
		-2F=38H-12\sum_{j=1}^{10}E_j,\quad
		-D_i=6H-2\sum_{j=1}^{10}E_j +E_i, \\
		D_i-F&= 13H-4\sum_{j=1}^{10}E_j -E_i,\quad
		D_i-2F = 32H-10\sum_{j=1}^{10}E_j -E_i. \nonumber
	\end{align}
	Up to permutation of the points, these divisors are in standard form, see \cref{se:shgh_conj}. Thus by \cite[Prop.~1.4]{ciliberto_miranda_homogeneous_interpolation_on_ten_points}, if $D$ is one of the divisors in \cref{eq:divisors_to_check}, $C \cdot D \geq -1$ for all $(-1)$-curves $C \subseteq X$.
	If $D\neq -2F$, the multiplicities of $D$ are bounded by $11$ and thus the SHGH Conjecture is a theorem for $D$ by \cite{dumnicki_jarnicki_new_effective_bounds_on_the_dimension_of_a_linear_system_in_p2}. In the case $D=-2F$, the SHGH Conjecture is settled in \cite[Thm.~0.1]{ciliberto_miranda_homogeneous_interpolation_on_ten_points}. Thus, we obtain for each divisor $D$ in \cref{eq:divisors_to_check} that $h^0(D)= \chi(D)=0$.
	
	To show that \cref{eq:interesting_collection} is not full, by \cref{prop:height_zero_if_coll_full} and \cref{lem:height_bigger_than_pseudoheight}
	it suffices to show that the anticanonical pseudoheight of \cref{eq:interesting_collection} is at least $-1$.
	As the exceptional collection only consists of sheaves, it is enough to show that $\mathrm{e}(E_i, E_j) \neq 0$ for all $E_i, E_j$, $i<j$, appearing in \cref{eq:interesting_collection}. In other words, we have to show that the following dimensions vanish:
	\begin{align*}
		\hom(\mcO_X, \mcO_X(D_i))&=h^0(D_i),\\
		\hom(\mcO_X, \mcO_X(F))&=h^0(F),\\
		\hom(\mcO_X, \mcO_X(2F))&=h^0(2F),\\
		\hom(\mcO_X(D_i), \mcO_X(D_j))&=h^0(D_j-D_i),\\
		\hom(\mcO_X(D_i), \mcO_X(F))&=h^0(F-D_i),\\
		\hom(\mcO_X(D_i), \mcO_X(2F))&=h^0(2F-D_i),\\
		\hom(\mcO_X(F), \mcO_X(2F))&=h^0(F),
	\end{align*}
	where $1\leq i, j \leq 10$ and $i\neq j$.
	All these divisors have either negative degree or are of the form $D_j-D_i=E_i-E_j$, thus the vanishing holds for trivial reasons. Hence, $\mathrm{ph}_{\mathrm{ac}} >-2$ and we conclude that \cref{eq:interesting_collection} is not full.
\end{proof}
\begin{corollary}
	The admissible subcategory
	$$\mathcal{A}=\langle \mcO_X, \mcO_X(D_1), \dots, \mcO_X(D_{10}), \mcO_X(F), \mcO_X(2F) \rangle ^\perp$$
	is a universal phantom subcategory of $\sfD^b(X)$.
\end{corollary}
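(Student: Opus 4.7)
The plan is to extend the phantom property of $\mcA$, already noted in the introduction as a consequence of \cref{thm:main_thm}, to all products $X \times Y$ with $Y$ smooth projective. The crucial observation is that $X$ carries a second exceptional collection of the same length $13$ which is full, namely $(\mcO_X, \mcO_X(E_1), \dots, \mcO_X(E_{10}), \mcO_X(H), \mcO_X(2H))$, so one can profitably compare the two induced decompositions of $\sfK_0(X \times Y)$.

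Denote the exceptional objects of \cref{eq:interesting_collection} by $E_1', \dots, E_{13}'$. First I would record that $\sfK_0(\mcA) = 0$ and that the classes $[E_i']$ form a $\bbZ$-basis of $\sfK_0(X)$: the additivity of $\sfK_0$ for the semiorthogonal decomposition $\sfD^b(X) = \langle \mcA, E_1', \dots, E_{13}' \rangle$ gives $\sfK_0(X) = \sfK_0(\mcA) \oplus \bigoplus_{i=1}^{13} \bbZ[E_i']$, and since $\sfK_0(X) \cong \bbZ^{13}$ is free of rank $13$, both statements follow. For any smooth projective $Y$, tensoring with $\sfD^b(Y)$ yields the semiorthogonal decomposition
$$\sfD^b(X \times Y) = \langle \mcA \boxtimes \sfD^b(Y), \langle E_1' \rangle \boxtimes \sfD^b(Y), \dots, \langle E_{13}' \rangle \boxtimes \sfD^b(Y) \rangle$$
with $\langle E_i' \rangle \boxtimes \sfD^b(Y) \simeq \sfD^b(Y)$, and additivity of $\sfK_0$ gives
$$\sfK_0(X \times Y) = \sfK_0(\mcA \boxtimes \sfD^b(Y)) \oplus \bigoplus_{i=1}^{13} \sfK_0(Y).$$

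Applying the same construction to the full exceptional collection above realizes the Künneth isomorphism $\sfK_0(X \times Y) \cong \sfK_0(X) \otimes \sfK_0(Y)$, under which the summand $\sfK_0(\langle E_i' \rangle \boxtimes \sfD^b(Y))$ identifies with $[E_i'] \otimes \sfK_0(Y) \subseteq \sfK_0(X) \otimes \sfK_0(Y)$. Because the $[E_i']$ form a basis of $\sfK_0(X)$, the subgroup $\bigoplus_i [E_i'] \otimes \sfK_0(Y)$ already exhausts all of $\sfK_0(X \times Y)$, forcing the complementary direct summand $\sfK_0(\mcA \boxtimes \sfD^b(Y))$ to vanish. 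The argument is essentially formal once one invokes the standard facts that admissible pieces of a semiorthogonal decomposition tensored with $\sfD^b(Y)$ again give a semiorthogonal decomposition of $\sfD^b(X \times Y)$ and that $\sfK_0$ is additive for such decompositions. There is no serious obstacle; the key conceptual step is that a numerically exceptional collection of maximal length whose complement is a phantom automatically has classes forming an integral basis of $\sfK_0(X)$, which is what allows the two decompositions of $\sfK_0(X \times Y)$ to be compared on the nose.
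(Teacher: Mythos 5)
Your argument is correct, but it is genuinely different from the one in the paper. The paper disposes of the corollary by citing Gorchinskiy--Orlov: the Chow motive of $X$ (a blow-up of $\bbP^2_\bbC$ in points) is of Lefschetz type and $\sfK_0(\mcA)=0$, so their Corollary~4.3 gives vanishing of the $\sfK$-motive of $\mcA$ with integral coefficients, and their Proposition~4.4 then yields universality. You instead run a direct K\"unneth comparison: base change of the two semiorthogonal decompositions of $\sfD^b(X)$ along $-\times Y$, additivity of $\sfK_0$, and the observation that the classes $[E_i']$ form a $\bbZ$-basis of $\sfK_0(X)$, which forces $\sfK_0(\mcA\boxtimes\sfD^b(Y))=0$. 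This is sound and arguably more transparent, since it avoids K-motives entirely; the one external input you should cite explicitly is the base-change theorem for semiorthogonal decompositions (Kuznetsov), which is what guarantees that $\langle \mcA\boxtimes\sfD^b(Y), \langle E_1'\rangle\boxtimes\sfD^b(Y),\dots\rangle$ really is a semiorthogonal decomposition of $\sfD^b(X\times Y)$ with admissible pieces --- the paper's definition of $\boxtimes$ alone does not give this. The trade-off: your argument leans on the existence of a full exceptional collection on $X$ (which happens to hold here and is exactly what makes the example striking), whereas the motivic route applies more generally to any $X$ whose integral Chow motive is of Lefschetz type, with or without a known full exceptional collection. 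A final pedantic point, which affects the paper's proof equally: a phantom is by definition a \emph{nontrivial} admissible subcategory, so strictly one should also note that $\mcA\boxtimes\sfD^b(Y)$ is admissible and nonzero (e.g.\ it contains $p_X^*A$ for $0\neq A\in\mcA$); both facts again follow from the base-change machinery.
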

\begin{proof}
	The Chow motive of $X$ with integers coefficients is of Lefschetz type and $\sfK_0(\mathcal{A})=0$. By \cite[Cor.~4.3]{gorchinskiy_orlov_geometric_phantom_categories} the $\sfK$-motive of $\mathcal{A}$ with integer coefficients vanishes and \cite[Prop.~4.4]{gorchinskiy_orlov_geometric_phantom_categories} shows that $\mathcal{A}$ is a universal phantom.
\end{proof}
\begin{remark}
	Applying the techniques from \cite{kuznetsov_height_of_exceptional_collections_and_hochschild_cohomology_of_quasiphantom_categories} it is further possible to compute that the height of \cref{eq:interesting_collection} is $4$ and the Hochschild cohomology of $\mcA$ has the following dimensions:
	\begin{align*}
		\dim \mathrm{HH}^0(\mcA) &= 1,
		&&\dim \mathrm{HH}^1(\mcA) = 0, 
		&&\dim \mathrm{HH}^2(\mcA) = 12,
		&&\dim \mathrm{HH}^3(\mcA)  = 446,\\
		\dim \mathrm{HH}^4(\mcA)  &= 853,
		&&\dim \mathrm{HH}^5(\mcA)  = 420,
		&&\dim \mathrm{HH}^i(\mcA)  = 0 \;\, \text{for}\; i\geq 6.&&
	\end{align*}
\end{remark}

\printbibliography[title=References]

\end{document}